\newtheorem{theo+}              {Theorem}           [section]
\newtheorem{prop+}  [theo+]     {Proposition}
\newtheorem{coro+}  [theo+]     {Corollary}
\newtheorem{lemm+}  [theo+]     {Lemma}
\newtheorem{exam+}  [theo+]     {Example}
\newtheorem{rema+}  [theo+]     {Remark}
\newtheorem{defi+}  [theo+]     {Definition}
\newenvironment{theorem}{\begin{theo+}}{\end{theo+}}
\newenvironment{corollary}{\begin{coro+}}{\end{coro+}}
\theoremstyle{plain} \theoremstyle{remark}
\newtheorem{remark}{Remark}
\def \r{\mbox{${\mathbb R}$}}
\def\E{/\kern-1.0em \equiv }
\author{}
\begin{document}
\title[Stability and the index of  biharmonic hypersurfaces in a Riemannian manifold]{Stability and the index of  biharmonic hypersurfaces in a Riemannian manifold}
\subjclass{58E20} \keywords{The second variations of biharmonic hypersurfaces, Stable biharmonic hypersurfaces, The index of biharmonic hypersurfaces, the index of biharmonic torus, constant mean curvature hypersurfaces.}
\author{Ye-Lin Ou $^{*}$}
\thanks{$^{*}$ This work was supported by a grant from the Simons Foundation ($\#427231$, Ye-Lin Ou).}
\address{Department of
Mathematics,\newline\indent Texas A $\&$ M University-Commerce,
\newline\indent Commerce, TX 75429,\newline\indent USA.\newline\indent
E-mail:yelin.ou@tamuc.edu }
\date{02/08/2020}
\maketitle
\section*{Abstract}
\begin{quote}  In this paper, we give an explicit second variation formula for  a biharmonic hypersurface in a Riamannian manifold similar to that of a minimal hypersurface. We then use the second variation formula to compute the stability index of the known biharmonic hypersurfaces in a Euclidean sphere, and to prove the non-existence of unstable proper biharmonic hypersurface in a Euclidean space or a hyperbolic space, which adds another special case to support Chen's conjecture on biharmonic submanifolds.
{\footnotesize } 
\end{quote}

\section{Stability and the  index of minimal hypersurfaces}

It is well known that minimal hypersurfaces $M^m\to (N^{m+1}, h)$ in a Riemannian manifold  are critical points of the area functional on hypersurfaces, i.e.,
\begin{equation}\notag
\frac{\rm d}{{\rm d}t}\big({\rm Area} (M_t)\big)_{t=0}=-m\int_{M} f H{\rm d}v_g=0.
\end{equation}
This is equivalent to the statement that the mean curvature $H=\frac{1}{m} {\rm Tr} A$ of the hypersurface of $M$ vanishes identically, where $A$ is the shape operator of  the hypersurface.

 As it is also well known that a critical point may not give a local minimum of the area functional. To have a better understanding of minimal hypersurfaces as the critical points of a functional, one needs to know the second variation that leads to the concepts of the stability and the index of minimal hypersurfaces. \\
 
 Recall (see e.g., \cite{Al} and \cite{MR17}) that a minimal hypersurface is {\em stable} if the second variation of the area functional is always nonnegative for  any normal variation with compact support, i.e., 
 \begin{equation}\notag
\frac{\rm d^2}{{\rm d}t^2}\big({\rm Area} (M_t)\big)_{t=0}\ge 0.
\end{equation}
 
 For a complete orientable minimal hypersurface $M^m\to (N^{m+1},h)$ in a Riemannian manifold,  there is a unit normal vector field $\xi$ along $M$  so that any section $V$ of the normal bundle with compact support can be written as $V=f\xi$ for a function $f$ with compact support in $M$, and the second variation of the area functional with the $V=f\xi$ as variation vector field can be written as :
\begin{equation}\label{Ms}
\frac{\rm d^2}{{\rm d}t^2}\big({\rm Area} (M_t)\big)_{t=0}=\int_{M}\{|\nabla f|^2-({\rm Ric}^N(\xi,\xi)+|A|^2)f^2\}{\rm d}v_g,
\end{equation}
where $|A|^2$ is the squared norm of the second fundamental form of the hypersurafce, and ${\rm Ric}^N(\xi,\xi)=\sum_{i=1}^m\langle{\rm R}^N(\xi, e_i)e_i, \xi\rangle=\sum_{i=1}^m {\rm R}^N (\xi, e_i, \xi, e_i)$ is the Ricci curvature in the direction $\xi$.\\

Note that by using the divergence theorem: $\int_Mf\Delta f {\rm d}v_g= -\int_M|\nabla f|^2 {\rm d}v_g$, we can rewrite  (\ref{Ms})  as
\begin{equation}\label{Ms1}
\frac{\rm d^2}{{\rm d}t^2}\big({\rm Area} (M_t)\big)_{t=0}=q_{M}(f)=-\int_{M} f J (f ){\rm d}v_g\ge 0,
\end{equation}
where  $J (f )=\Delta f+(|A|^2+ {\rm Ric}^N(\xi,\xi))f$ is called the {\it Jacobi operator} on the minimal hypersurface.\\

Recall (see e.g., \cite{Al}) that the {\it index} of a minimal hypersurface $M$, denoted by ${\rm Ind}(M)$, is  the maximum dimension of any subspace $V$ of $C^{\infty}_0(M)$ on which $q_M(f)$ is negative, i.e.,
$${\rm Ind}(M)= {\rm Max} \{ {\rm dim} V: V\subset C^{\infty}_0(M)\,| \,q_M(f)<0, \forall\, f\in V\}.$$

In particular, the index of a minimal hypersurface $M\hookrightarrow S^{m+1}$ in a Euclidean sphere is  the  the largest dimension of subspace $V\subset C_0^{\infty}(M)$ on which the quadratic form 
\begin{align}
q_{M}(f)=-\int_{M} f [\Delta f+(|A|^2+m)f]{\rm d}v_g< 0.
\end{align}

The following are some well known facts about the index of minimal hypersurfaces in a sphere:
\begin{itemize}
\item For a compact minimal hypersurface $M$ in $S^{m+1}$,  ${\rm Ind}(M)\ge 1$ and with $``="$ holds if and only if $M$ is a totally geodesic equator $S^m\subset S^{m+1}$(\cite{Si68});
\item For a compact non-totally geodesic minimal hypersurface $M^m\to S^{m+1}$, ${\rm Ind}(M)\ge m+3$ (see \cite{Ur90} for $m=2$ and \cite{So93} for the general case);
\item For the minimal Clifford torus $S^p(\sqrt{\frac{p}{m}})\times S^q(\sqrt{\frac{q}{m}})\hookrightarrow S^{m+1}$ with $p+q=m$, the index ${\rm Ind}(M)= m+3$;
\item It has been a  conjecture which is still open (see e.g., \cite{Al}, \cite{ABP07})  that any compact non-totally geodesic minimal hypersurface $M^m\to S^{m+1}$ with ${\rm Ind}(M)= m+3$ is a Clifford torus.
\end{itemize}

Biharmonic hypersurfaces are generalizations of minimal hypersurfaces. A biharmonic hypersurface in a Riemannian manifold can be characterized as an isometric immersion $M^m\to (N^{m+1}, h)$ whose mean curvature function $H$ solves the following equation (see  \cite{Ji87}, \cite{Ch91}  and \cite{CMO02} for the case when the ambient space is a space form, and \cite{Ou10} for the general case):
\begin{equation}\label{BHEq}
\begin{cases}
\Delta H-H |A|^{2}+H{\rm
Ric}^N(\xi,\xi)=0,\\
 2A\,({\rm grad}\,H) +\frac{m}{2} {\rm grad}\, H^2
-2\, H \,({\rm Ric}^N\,(\xi))^{\top}=0,
\end{cases}
\end{equation}
where ${\rm Ric}^N : T_qN\longrightarrow T_qN$ denotes the Ricci
operator of the ambient space defined by $\langle {\rm Ric}^N\, (Z),
W\rangle={\rm Ric}^N (Z, W)$ and  $A$ is the shape operator of the
hypersurface with respect to the unit normal vector $\xi$.\\

It is clear from (\ref{BHEq}) that any minimal hypersurface is automatically a biharmonic hypersurface. So it is a custom to call a biharmonic hypersurface which is not minimal  a {\it proper biharmonic} hypersurface. For more  study of biharmonic maps and biharmonic submanifolds we refer the reader to a recent book \cite{OC20} and the references therein.\\

In this paper, we derive an explicit second variation formula for  a biharmonic hypersurface in a Riemannian manifold similar to that of a minimal hypersurface. We then use the second variation formula to compute the stability index of the known biharmonic hypersurfaces in a Euclidean sphere, and to prove the non-existence of unstable proper biharmonic hypersurface in a Euclidean space or a hyperbolic space, which adds another special case to support Chan's conjecture on biharmonic submanifolds.

\section{Stability and the index of biharmonic hypersurfaces}
In light of the ideas from the study of stability and the index of minimal hypersurfaces, we define a proper biharmonic hypersurface $M\to (N^{m+1},h)$ to be {\bf stable} if the second variation of the bienergy functional is always nonnegative for  any normal variation with compact support.  With this, we have
\begin{theorem}\label{MT1} 
A complete orientable biharmonic hypersurface $\phi:M^m\to (N^{m+1},h)$ of a Riemannian manifold is stable if and only if for any compactly supported function $f$ on $M$, we have
\begin{align}\notag
Q(f)=&\frac{{\rm d}^2 }{{\rm d} t^2}E_2(\phi_t)|_{t=0}\\\notag
=&\int_M [f (|A|^2-{\rm Ric}^N(\xi, \xi))-\Delta f]^2  ] {\rm d} v_g\\\label{2Vformula}
&+\int_M |mf\nabla H-2({\rm Ric}^N(\xi))^{\top}+2A(\nabla f)|^2  {\rm d} v_g\\\notag
&+\int_M mf^2H[(\nabla^{N}_{\xi}{\rm Ric}^N)(\xi,\xi))-2{\rm Tr}\,{\rm R}^N(\xi, \cdot, \xi, \nabla^{N}_{\xi}(\cdot))]{\rm d} v_g\\\notag
&-\int_M 4mf^2H {\rm Tr}\,{\rm R}^N(\xi, A(\cdot), \xi, \cdot) {\rm d} v_g\ge 0.
\end{align}
\end{theorem}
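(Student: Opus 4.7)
The plan is to compute $\frac{d^2}{dt^2}E_2(\phi_t)\big|_{t=0}$ directly, starting from the observation that for an isometric immersion $\phi_t$ the tension field reduces to $\tau(\phi_t) = m H_t\,\xi_t$, so that
\[
E_2(\phi_t) = \tfrac12\int_M |\tau(\phi_t)|^2\, dv_{g_t} = \tfrac{m^2}{2}\int_M H_t^2\, dv_{g_t}.
\]
Two differentiations in $t$ reduce the problem to the first and second variations of $H_t$, $\xi_t$ and $dv_{g_t}$ under a compactly supported normal variation $V = f\xi$, combined with the biharmonic equations (\ref{BHEq}).

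I would first collect the standard first-variation data: $\partial_t\, dv_{g_t}\big|_{t=0} = -m f H\, dv_g$; the first variation of the unit normal is $\dot\xi = -\nabla f$, which is purely tangential because $\xi_t$ remains a unit vector along the variation; and the first variation of the mean curvature is $\dot H = \tfrac{1}{m}\bigl(\Delta f + (|A|^2 + {\rm Ric}^N(\xi,\xi))f\bigr)$, namely $\tfrac{1}{m}J(f)$ with $J$ the Jacobi operator used in (\ref{Ms1}). Expanding $\tfrac{d^2}{dt^2}\tfrac{m^2}{2}\int H_t^2\, dv_{g_t}$ produces a collection of terms quadratic in $\dot H$, linear in $H\ddot H$, mixed in $fH^2\dot H$, and involving $H^2\,\partial_t^2(dv_{g_t})$. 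At the biharmonic critical point, the normal equation in (\ref{BHEq}) lets me rewrite $H(|A|^2 - {\rm Ric}^N(\xi,\xi))$ as $\Delta H$, and the tangential equation in (\ref{BHEq}) lets me rewrite $H({\rm Ric}^N(\xi))^\top$ in terms of $A(\nabla H)$ and $\nabla H^2$. These substitutions are exactly what is required to complete the two squares in (\ref{2Vformula}): the first, $[f(|A|^2 - {\rm Ric}^N(\xi,\xi)) - \Delta f]^2$, is assembled from $m^2 \dot H^2$ together with the normal contribution of $m^2 H \ddot H$, while the second, $|mf\nabla H - 2({\rm Ric}^N(\xi))^\top + 2A(\nabla f)|^2$, is obtained from the tangential contribution arising from $\dot\xi = -\nabla f$ and the tangential component of the first variation of $\tau$.

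The main obstacle is the computation of $\ddot H$, the second normal variation of the mean curvature. This requires the second variations of the induced metric $g_t$ and of the unit normal $\xi_t$, and, because ${\rm Ric}^N(\xi_t,\xi_t)$ enters the Jacobi formula, also derivatives of the ambient curvature tensor along $\xi$. The pure second-order motion of $\xi_t$ along the ambient normal geodesic contributes the term $(\nabla^N_\xi {\rm Ric}^N)(\xi,\xi)$; the interaction of $\dot\xi = -\nabla f$ with the Ricci operator produces the contraction ${\rm Tr}\,R^N(\xi,\cdot,\xi,\nabla^N_\xi\cdot)$; and the composition of the shape operator with the tangential variation in the second variation of $g_t$ yields the last contraction ${\rm Tr}\,R^N(\xi, A(\cdot), \xi, \cdot)$. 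After one integration by parts to convert $\langle\nabla f,\nabla H\rangle$ against $H$ into an $f\Delta H$ term, and a final application of the normal biharmonic equation to absorb the resulting $H\Delta H$, every remaining integral matches (\ref{2Vformula}). Keeping sign conventions straight between the shape operator $A$, the normal $\xi$, and the tangential projection of the ambient Ricci field is the principal bookkeeping hazard throughout.
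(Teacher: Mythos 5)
Your proposal founders at its very first step. The bienergy whose second variation defines stability here is $E_2(\phi_t)=\tfrac12\int_M|\tau(\phi_t)|^2\,{\rm d}v_g$, where the domain metric $g$ and the volume form ${\rm d}v_g$ are held \emph{fixed} and $\tau(\phi_t)$ is the tension field of the map $\phi_t:(M,g)\to(N,h)$. For $t\neq 0$ the map $\phi_t$ is no longer an isometric immersion with respect to $g$, so the identification $\tau(\phi_t)=mH_t\xi_t$ fails, and replacing $E_2(\phi_t)$ by $\tfrac{m^2}{2}\int_M H_t^2\,{\rm d}v_{g_t}$ (with the induced metric $g_t$ and its volume form) changes the functional. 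The latter is a Willmore-type functional: its Euler--Lagrange operator is not the bitension field (for a surface in $\r^3$ it is $\Delta H+2H(H^2-K)$ rather than $\Delta H-H|A|^2$), so a biharmonic hypersurface is not in general even a critical point of it, and its first --- let alone second --- $t$-derivative at $t=0$ does not agree with that of $E_2$. Consequently the quantity you propose to compute is not $Q(f)$, no matter how carefully the variations $\dot H$, $\ddot H$, $\dot\xi$ and $\partial_t^2\,{\rm d}v_{g_t}$ are tracked; the heuristic matching of individual curvature terms at the end cannot repair a mismatch of functionals.

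The paper avoids this entirely: it quotes Jiang's general second variation formula (\ref{2Vformula1}) for a biharmonic map, then specializes it to a normal section $V=f\xi$ of a hypersurface using $\tau(\phi)=mH\xi$, the identity $J^{\phi}(f\xi)=fJ^{\phi}(\xi)-(\Delta f)\xi-2\nabla^{\phi}_{\nabla f}\xi$, and the known expression (\ref{10}) for $J^{\phi}(\xi)$; the two squares in (\ref{2Vformula}) are simply the normal and tangential components of $J^{\phi}(f\xi)$, and the remaining curvature integrals come from evaluating the other terms of Jiang's formula on $V=f\xi$. If you want a self-contained derivation, you must differentiate $\tfrac12\int_M|\tau(\phi_t)|^2\,{\rm d}v_g$ twice keeping the domain metric and volume form fixed --- which is exactly how (\ref{2Vformula1}) is obtained --- rather than pass to the induced-metric functional $\int H_t^2\,{\rm d}v_{g_t}$.
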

\begin{proof}
The  following second variation formula for a general biharmonic map $\phi: (M^m, g)\to (N^n, h)$ between two Riemannian manifolds was derived by Jiang in \cite{Ji86}:
\begin{equation}\label{2Vformula1}
\begin{aligned}
\frac{{\rm d}^2 }{{\rm d} t^2}E_2(\phi_t)|_{t=0}=&\int_M [|J^{\phi}(V)|^2+ {\rm R}^N(V, \tau(\phi), V,\tau(\phi) ] {\rm d} v_g\\
&\hskip-.5in -\sum_{i=1}^m\int_M \langle V, (\nabla^{N}_{{\rm d}\phi(e_i)}{\rm R}^N)({\rm d}\phi(e_i), \tau(\phi))V \\
&\hskip-.5in + (\nabla^{N}_{\tau(\phi)}{\rm R}^N)({\rm d}\phi(e_i), V){\rm d}\phi(e_i)\\
&\hskip-.5in +2{\rm R}^N({\rm d}\phi(e_i), V)\nabla^{\phi}_{e_i}\tau(\phi)+2{\rm R}^N({\rm d}\phi(e_i), \tau(\phi))\nabla^{\phi}_{e_i}V \rangle {\rm d} v_g.
\end{aligned}
\end{equation}
\indent When $\phi:M^m\to (N^{m+1},h)$ is an orientable biharmonic hypersurface, we consider the normal variation with variation vector field $V=f\xi$ and use $\tau(\phi)=mH\xi$, identify ${\rm d}\phi(e_i)=e_i$, and a straightforward computation to have
\begin{align}\label{C1}
&{\rm R}^N(V, \tau(\phi), V,\tau(\phi)=m^2f^2H^2{\rm R}^N(\xi, \xi, \xi,\xi)=0,\\\label{C2}
&\sum_{i=1}^m \langle V,  (\nabla^{N}_{\tau(\phi)}{\rm R}^N)({\rm d}\phi(e_i), V){\rm d}\phi(e_i)\rangle=mfH\sum_{i=1}^m \langle \xi,  (\nabla^{N}_{\xi}{\rm R}^N)(e_i, f\xi)e_i\rangle\\\notag
&=mf^2H[-(\nabla^{N}_{\xi}{\rm Ric}^N)(\xi,\xi))+2{\rm Tr}\,{\rm R}^N(\xi, \cdot, \xi, \nabla^{N}_{\xi}(\cdot))],
\\\label{C3}
&\sum_{i=1}^m \langle V, (\nabla^{N}_{{\rm d}\phi(e_i)}{\rm R}^N)({\rm d}\phi(e_i), \tau(\phi))V\rangle=mf^2H\sum_{i=1}^m \langle \xi, (\nabla^{N}_{e_i}{\rm R}^N)(e_i, \xi)\xi\rangle=0, 
\end{align}
 \begin{align}\ \label{C4}
&\sum_{i=1}^m \langle V, 2{\rm R}^N({\rm d}\phi(e_i), V)\nabla^{\phi}_{e_i}\tau(\phi)\rangle=2mf^2H\sum_{i=1}^m{\rm R}^N(\xi, \nabla^N_{e_i}\xi, e_i,\xi)\\\notag
 &=2mf^2H {\rm Tr}\,{\rm R}^N(\xi, A(\cdot), \xi, \cdot), {\rm and}
\\\label{C5}
&\sum_{i=1}^m \langle V, 2{\rm R}^N({\rm d}\phi(e_i), \tau(\phi))\nabla^{\phi}_{e_i}V \rangle=2mfH\sum_{i=1}^m \langle \xi, {\rm R}^N(e_i, \xi)\nabla^{N}_{e_i}(f\xi) \rangle\\\notag
 &=2mf^2H  {\rm Tr}\,{\rm R}^N(\xi, A(\cdot), \xi, \cdot).
\end{align}
On the other hand, using the formula (see e.g., \cite{Ou09}) 
\begin{align}\label{g9}
J^{\phi}(V)=J^{\phi}(f\xi)=fJ^{\phi}(\xi)-(\Delta f)\xi-2\nabla^{\phi}_{\nabla f}\xi,
\end{align}
and a further computation (see also \cite{Ou10}), we obtain
\begin{align}\notag
J^{\phi}(\xi)=&-\sum_{i=1}^m\left( (\nabla^{\phi}_{e_i}\nabla^{\phi}_{e_i}-\nabla^{\phi}_{\nabla^M_{e_i}e_i})\xi-{\rm R}^N({\rm d}\phi(e_i), \xi){\rm d}\phi(e_i)\right)\\\label{10}
=&(|A|^2-{\rm Ric}^N(\xi, \xi))\xi+m\nabla H-2({\rm Ric}^N(\xi))^{\top},\\\label{11}
2\nabla^{\phi}_{\nabla f}\xi&=-2A(\nabla f).
\end{align}

It follows from (\ref{10}), (\ref{11}) and (\ref{g9}) that
\begin{align}\notag
|J^{\phi}(V)|^2=&|J^{\phi}(f\xi)|^2\\\label{12}
=&[f (|A|^2-{\rm Ric}^N(\xi, \xi))-\Delta f]^2 +|mf\nabla H-2({\rm Ric}^N(\xi))^{\top}+2A(\nabla f)|^2.
\end{align}
Substituting (\ref{C1})-(\ref{C5}) and (\ref{12}) into (\ref{2Vformula1}) we obtain 
\begin{align}\notag
Q(f)=&\frac{{\rm d}^2 }{{\rm d} t^2}E_2(\phi_t)|_{t=0}\\\notag
=&\int_M [f (|A|^2-{\rm Ric}^N(\xi, \xi))-\Delta f]^2  ] {\rm d} v_g\\\label{2Vformula2}
&+\int_M |mf\nabla H-2({\rm Ric}^N(\xi))^{\top}+2A(\nabla f)|^2  {\rm d} v_g\\\notag
&+\int_M mf^2H[(\nabla^{N}_{\xi}{\rm Ric}^N)(\xi,\xi))-2{\rm Tr}\,{\rm R}^N(\xi, \cdot, \xi, \nabla^{N}_{\xi}(\cdot))]{\rm d} v_g\\\notag
&-\int_M 4mf^2H {\rm Tr}\,{\rm R}^N(\xi, A(\cdot), \xi, \cdot) {\rm d} v_g,
\end{align}
from which and the definition of the stability of a biharmonic hypersurface we obtain the theorem.
\end{proof}
\begin{corollary} 
An orientable biharmonic hypersurface $\phi: M^m\to (N^{m+1}(c), h)$ in a space form of constant sectional curvature  $c$ is  stable if and only if the stability inequality
$Q(f) \ge 0$ holds for any compactly supported smooth function $f$ on $M$, where
\begin{equation}\notag
\begin{aligned}
Q(f)=  \int_M \big\{[f (|A|^2-cm)-\Delta f]^2 +|mf\nabla H+2A(\nabla f)|^2-4cm^2f^2H^2\big\}{\rm d}v_g.
\end{aligned}
\end{equation}
 In particular,
(i) any biharmonic hypersurface in a Euclidean space or a hyperbolic space is stable, and
(ii) the stability quadratic form for a biharmonic hypersurface $M^m\to S^{m+1}$ in a Euclidean sphere is given by
\begin{equation}\label{Sm}
\begin{aligned}
Q(f)= \int_M \big\{[f (|A|^2-m)-\Delta f]^2 +|mf\nabla H+2A(\nabla f)|^2-4m^2f^2H^2\big\}{\rm d}v_g.
\end{aligned}
\end{equation}
\end{corollary}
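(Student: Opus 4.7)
The plan is to specialize Theorem~\ref{MT1} to the case where the ambient space has constant sectional curvature~$c$, whose Riemann tensor takes the form
\[
{\rm R}^N(X,Y)Z = c\bigl(\langle Y,Z\rangle X - \langle X,Z\rangle Y\bigr),
\]
and then to read off the two sign statements from the resulting simplified formula.

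First I would simplify each curvature-dependent quantity appearing in (\ref{2Vformula}) using this explicit ${\rm R}^N$. A quick computation gives ${\rm Ric}^N(\xi,\xi)=cm$ and ${\rm Ric}^N(\xi)=cm\,\xi$, so $({\rm Ric}^N(\xi))^{\top}=0$, which cleans up the first two integrands to $[f(|A|^2-cm)-\Delta f]^2$ and $|mf\nabla H+2A(\nabla f)|^2$. Since $\nabla^N {\rm R}^N\equiv 0$ in a space form, $(\nabla^N_{\xi}{\rm Ric}^N)(\xi,\xi)=0$; and after substituting the constant-curvature formula, the trace ${\rm Tr}\,{\rm R}^N(\xi,\cdot,\xi,\nabla^N_{\xi}(\cdot))$ becomes $c\sum_i \langle e_i,\nabla^N_{\xi}e_i\rangle=\tfrac{c}{2}\sum_i \xi(|e_i|^2)=0$ whenever $\{e_i\}$ is a local orthonormal frame on $M$, so the entire third integrand of (\ref{2Vformula}) vanishes.

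For the remaining trace the same substitution yields ${\rm Tr}\,{\rm R}^N(\xi,A(\cdot),\xi,\cdot)=c\,{\rm Tr}\,A=cmH$, so the last integrand turns into $-4cm^2 f^2H^2$. Collecting these simplifications in (\ref{2Vformula}) gives the claimed expression for $Q(f)$. Part~(i) then follows immediately: when $c=0$ the integrand is a sum of two squares, and when $c<0$ the additional term $-4cm^2f^2H^2$ is likewise non-negative, so $Q(f)\ge 0$ for every compactly supported $f$ and Theorem~\ref{MT1} gives stability. Part~(ii) is the specialisation to $c=1$.

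The argument is essentially direct substitution into Theorem~\ref{MT1}; the only mildly delicate step is the vanishing of ${\rm Tr}\,{\rm R}^N(\xi,\cdot,\xi,\nabla^N_{\xi}(\cdot))$, where the cancellation comes from orthonormality of the tracing frame (via $\xi(|e_i|^2)=0$) rather than from parallelism of ${\rm R}^N$, and which I would verify by a short pointwise calculation before assembling the final formula.
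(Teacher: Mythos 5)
Your proposal is correct and follows essentially the same route as the paper: direct substitution of the constant-curvature identities ${\rm Ric}^N(\xi,\xi)=cm$, $({\rm Ric}^N(\xi))^{\top}=0$, $(\nabla^N_{\xi}{\rm Ric}^N)(\xi,\xi)=0$, ${\rm Tr}\,{\rm R}^N(\xi,\cdot,\xi,\nabla^N_{\xi}(\cdot))=0$, and ${\rm Tr}\,{\rm R}^N(\xi,A(\cdot),\xi,\cdot)=cmH$ into the second variation formula of Theorem~\ref{MT1}, followed by the sign observation for $c\le 0$. The only difference is that you supply short verifications of these identities (in particular the orthonormality argument for the vanishing trace) where the paper simply lists them.
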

\begin{proof}
The corollary follows from Equation (\ref{2Vformula2}) and the following identities for a space form $N^{m+1} (c)$ of constant sectional curvature $c$: 
\begin{align}\notag
&{\rm Ric}^N(\xi, \xi))=mc,\\\notag
&{\rm Ric}^N(\xi))^{\top}=0,\\\notag
&(\nabla^{N}_{\xi}{\rm Ric}^N)(\xi,\xi)=0,\\\notag
&{\rm Tr}\,{\rm R}^N(\xi, \cdot, \xi, \nabla^{N}_{\xi}(\cdot))=0,\;{\rm and}\\\notag
& 4mf^2H {\rm Tr}\,{\rm R}^N(\xi, A(\cdot), \xi, \cdot) =4cm^2f^2H^2.
\end{align}
\end{proof}

\begin{remark}
For stable minimal surfaces in Euclidean space $\r^3$, we have a well-known result of do-Carmo-Peng: Any complete oriented and stable minimal surface $\phi: M^2\to \r^3$ is a plane. On the other hand,  we know that catenoid is a complete  and oriented minimal surfaces in $\r^3$, so it is unstable. In contrast, our corollary above says that there is no unstable biharmonic hypersurface in a Euclidean space or a hyperbolic space. This adds another special case to support Chen's conjecture on biharmonic submanifolds which can be stated as there exists no proper biharmonic submanifold in a Euclidean space. See \cite{OC20} for a more detailed account on Chen's conjecture on biharmonic submanifolds.
\end{remark}

\section{The stability index of biharmonic hypersurfaces in $S^{m+1}$}

Again, following the idea  of the index of minimal hypersurfaces, we define the {\bf index} of a proper biharmonic hypersurface $M\to (N^{m+1},h)$ to be  the maximum dimension of any subspace $V$ of $C^{\infty}_0(M)$ on which $Q(f)$ defined in (\ref{2Vformula2}) is negative, i.e.,
$${\rm Ind}(M)= {\rm Max} \{ {\rm dim} V: V\subset C^{\infty}_0(M)\,| \,Q(f)<0, \forall\, f\in V\}.$$

About  biharmonic hypersurfaces in a sphere, we know that
\begin{itemize}
\item a hypersurface $\varphi : (M^m, g) \longrightarrow  S^{m+1}$ 
with  nonzero constant mean curvature is biharmonic if and only if the squared norm of the shape operator is constant (see  \cite{Ji86} or directly using (\ref{BHEq})).
\item the only known proper biharmonic hypersurfaces in a sphere are (\cite{Ji86}, \cite{CMO01}): $S^m(\frac{1}{\sqrt{2}})$ and $S^p(\frac{1}{\sqrt{2}})\times S^{m-p}(\frac{1}{\sqrt{2}})$ for $p\ne \frac{m}{2}$, or an open part of one of these two. It has been a conjecture (\cite{BMO1}) which is still open that  there is no other proper biharmonic hypersurface in a sphere than open parts of these two.
\end{itemize}

In this section, we will use the stability form (\ref{Sm}) to compute the index of  the known proper biharmonic hypersurfaces in a sphere.

\begin{theorem}\label{MT2}
(i) For $1\le p <q=m-p$, the stability index of the proper biharmonbic hypersurface  $S^p(\frac{1}{\sqrt{2}})\times S^q(\frac{1}{\sqrt{2}}) \to S^{m+1}$ is 
\begin{equation}\label{indF}
 {\rm Ind}\left(S^p(\frac{1}{\sqrt{2}})\times S^{q}(\frac{1}{\sqrt{2}})\right)=
 \begin{cases}
 1,\;\;\;\;\; {\rm for}\; 1\le p< q\le 2p,\\
 p+2, \;\;\;\;\; {\rm for}\; 2p< q.
 \end{cases} 
\end{equation}
(ii)  The stability index of the biharmonic hypersurface $S^m(\frac{1}{\sqrt{2}})\to S^{m+1}$ is 
\begin{align}
{\rm Ind}\,\left(S^m(\frac{1}{\sqrt{2}})\right)=1.
\end{align}
\end{theorem}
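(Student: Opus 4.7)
The plan is to reduce the stability form $Q(f)$ of (\ref{Sm}) to a scalar eigenvalue problem on each hypersurface and then count negative modes. First I would exploit that on both $M = S^m(1/\sqrt 2)$ and $M = S^p(1/\sqrt 2)\times S^q(1/\sqrt 2)$ the shape operator $A$ has all principal curvatures equal to $\pm 1$: this gives $|A|^2 = m$ (so the first bracket in (\ref{Sm}) collapses to $(\Delta f)^2$), $A$ acts as $\pm \mathrm{Id}$ on two mutually orthogonal distributions so $|A(X)|^2 = |X|^2$ for every tangent $X$, and the mean curvature $H$ is a nonzero constant so $\nabla H = 0$. Substituting into (\ref{Sm}) yields
\begin{equation*}
Q(f) = \int_M \bigl\{(\Delta f)^2 + 4|\nabla f|^2 - 4m^2 H^2 f^2\bigr\}\,dv_g,
\end{equation*}
where $m^2 H^2 = m^2$ for the small sphere and $m^2 H^2 = (q-p)^2$ for the Clifford torus.

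Next I would diagonalize $Q$ using the Laplace spectrum. Integration by parts rewrites this as $Q(f) = \int_M f\,L(f)\,dv_g$ with the self-adjoint operator $L = \Delta^2 - 4\Delta - 4m^2 H^2$. Since $L$ is a polynomial in $\Delta$, any $-\Delta$-eigenfunction $f$ with $-\Delta f = \mu f$ satisfies $Q(f) = (\mu^2 + 4\mu - 4m^2 H^2)\|f\|_{L^2}^2$, which is negative exactly when $\mu$ lies strictly below the threshold $\mu^* := -2 + 2\sqrt{1 + m^2 H^2}$. Hence $\mathrm{Ind}(M)$ equals the total dimension of $-\Delta$-eigenspaces with eigenvalue below $\mu^*$.

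Then I would carry out the case analysis. For (ii), on $S^m(1/\sqrt 2)$ the eigenvalues are $\mu_k = 2k(k+m-1)$; $\mu_0 = 0 < \mu^*$ contributes dimension $1$, and for $k \ge 1$ the inequality $(m+1)^2 > 1+m^2$ yields $\mu_k \ge 2m > \mu^*$, so $\mathrm{Ind} = 1$. For (i), on the Clifford torus the joint eigenvalues are $\mu_{k,\ell} = 2k(k+p-1) + 2\ell(\ell+q-1)$ with tensor-product eigenspaces, and I would compare the small modes to $\mu^* = -2 + 2\sqrt{1 + (q-p)^2}$: the $(0,0)$ mode always contributes (dim $1$); for $(1,0)$, the condition $(p+1)^2 < 1+(q-p)^2$ is equivalent (by integrality of $q-p$) to $q > 2p$, adding dim $p+1$; for $(0,1)$, the condition reduces to $2q(1+p) < p^2$, which fails for $q > p \ge 1$. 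Combining gives $\mathrm{Ind}(M) = 1$ for $q \le 2p$ and $\mathrm{Ind}(M) = 1 + (p+1) = p+2$ for $q > 2p$.

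The main obstacle I anticipate is systematically excluding every remaining mode $(k,\ell)$ with $k + \ell \ge 2$: the threshold $\mu^*$ grows with $|q-p|$, and candidates such as $\mu_{1,1} = 2m$, $\mu_{2,0} = 4(p+1)$, and $\mu_{0,2} = 4(q+1)$ must all be shown to exceed $\mu^*$, leveraging the constraint $|q-p| < m = p+q$ together with the quadratic growth of $\mu_{k,\ell}$ in $(k,\ell)$.
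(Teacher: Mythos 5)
Your reduction of $Q$ is exactly the paper's: the same substitutions $|A|^2=m$, $\nabla H=0$, $|A(\nabla f)|^2=|\nabla f|^2$, the same integration by parts, and the same diagonalization over the Laplace spectrum; your threshold $\mu^{*}=-2+2\sqrt{1+m^2H^2}$ is just a cleaner packaging of the paper's sign check on each eigenspace. Your part (ii) is complete and correct, and your treatment of the modes $(0,0)$, $(1,0)$, $(0,1)$ on the torus reproduces the paper's computations, including the integrality step that converts $q>p+\sqrt{p^2+2p}$ into $q>2p$.

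The genuine gap is precisely the step you flag as ``the main obstacle'': it is not a routine verification, and in fact it fails. The modes $(k,\ell)$ with $\ell\ge1$ are harmless, since then $\mu_{k,\ell}\ge 2q$ while $\mu^{*}=-2+2\sqrt{1+(q-p)^2}<2(q-p)\le 2q-2$; but the modes $(k,0)$ with $k\ge 2$ are not. One has $\mu_{2,0}=4(p+1)<\mu^{*}$ exactly when $(q-p)^2>4(p+1)(p+2)$, i.e.\ $q-p>2\sqrt{(p+1)(p+2)}$. For $p=1$ this happens for all $q\ge 6$: on $S^1(\frac{1}{\sqrt{2}})\times S^6(\frac{1}{\sqrt{2}})\subset S^8$ one has $\mu^{*}=-2+2\sqrt{26}\approx 8.20$ and $\mu_{2,0}=8$, so $Q(f)=(64+32-100)\,\|f\|^2<0$ on that two-dimensional eigenspace and the index is at least $1+2+2=5$, not $p+2=3$. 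So your scheme, carried out honestly, does not establish the second case of (\ref{indF}); it shows that for $q$ large relative to $p$ the count must include every $(k,0)$ mode with $2k(k+p-1)<\mu^{*}$, and the stated formula needs correction in that range. You should be aware that the paper's own proof has exactly the same hole --- it dismisses all remaining eigenspaces with the sentence ``Similarly, we can check that $Q(f)$ is positive on any other eigenspace,'' which is the assertion that fails --- so your proposal has correctly isolated the one step of the published argument that cannot be completed as written.
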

\begin{proof}
It is also known (cf. e.g., \cite{Al}, \cite{ABP07})  that if $\lambda$ is an eigenvalue of the Laplacian on $S^p(r_1)$ with multiplicity $m_{\lambda}$ and $\mu$ is an eigenvalue of the Laplacian on $S^{m-p}(r_2)$ with multiplicity $m_{\mu}$, then $\nu=\lambda+\mu$ is an eigenvalue of the Laplacian on the product $S^p(r_1)\times S^{m-p}(r_2)$ with with a multiplicity $\sum m_{\lambda} m_{\mu}$ where the sum is made over all possible $\lambda, \mu$ satisfying $\lambda+\mu=\nu$.

In particular, when $r=1/\sqrt{2}$, the proper biharmonic hypersurface $T^{p,q}:=S^p(\frac{1}{\sqrt{2}})\times S^{m-p}(\frac{1}{\sqrt{2}}))\to S^{m+1}\subset \r^{m+2}$ has two distinct principal curvatures $\lambda=-1$ with multiplicity $m_{\lambda}=p$ and $ \mu=1$ with  multiplicity $m_{\mu}=m-p=q$. It follows that the mean curvature of the hypersurface $H=(m-2p)/m$ is nonzero constant since $p\ne m/2$,  $|A|^2=m$, and $|A(\nabla f)|^2=|\nabla f|^2$. Substituting these into the  stability index form (\ref{Sm}) yields
\begin{equation}\label{Sm4}
\begin{aligned}
Q(f)=\int_{T^{p,q}} \big\{(\Delta f)^2 -4f\Delta f-4(q-p)^2 f^2\big\}{\rm d}v_g.
\end{aligned}
\end{equation}

For $1\le p< q=m-p\le m-1$, the Laplacian on $S^p(\frac{1}{\sqrt{2}})\times S^q(\frac{1}{\sqrt{2}})$ has spectrum:
\begin{equation}\notag
\cdots<\cdots<\cdots<\lambda_2=-2q< \lambda_1=-2p<\lambda_0=0,
\end{equation}
For an eigenfunction $f$ with eigenvalue $\nu=-2p$, i.e., $\Delta f=-2p f$, (\ref{Sm4}) reads
\begin{equation}\notag
\begin{aligned}
Q(f)&=\int_{T^{p,q}} [4p^2+8p-4(q-p)^2]f^2{\rm d}v_g\\
&=-4\int_{T^{p,q}} (q^2-2pq-2p)f^2{\rm d}v_g.
\end{aligned}
\end{equation}
It follows that $Q(f)<0$  if and only if the quadratic function $-4(q^2-2pq-2p)<0$, which is equivalent to  $q> p+\sqrt{p^2+2p}$. One can further check that $q> p+\sqrt{p^2+2p}$ is equivalent to $q> 2p$ since there is no integer within $(2p, p+\sqrt{p^2+2p})$.

On the other hand, for an eigenfunction $f$ of the second nonzero eigenvalue $\nu=-2q$, i.e., $\Delta f=-2q f$, (\ref{Sm4}) reads
\begin{equation}\notag
\begin{aligned}
Q(f)&=\int_{T^{p,q}} [4q^2+8q-4(q-p)^2]f^2{\rm d}v_g\\
&>\int_{T^{p,q}} 4(2q+p^2)f^2{\rm d}v_g> 0.
\end{aligned}
\end{equation}

Similarly, we can check that $Q(f)$ is positive on any other eigenspace of the Laplacian on $S^p(\frac{1}{\sqrt{2}})\times S^q(\frac{1}{\sqrt{2}})$.\\

Finally, since  $T^{p,q}=S^p(\frac{1}{\sqrt{2}})\times S^q(\frac{1}{\sqrt{2}})$ is compact, we have the following  Sturm-Liouville's decomposition
\begin{align}\notag
C^{\infty}(T^{p,q})=\oplus_{i=0}^{\infty} E_{\lambda_i},
\end{align}
where $E_{\lambda_i}$ denotes the eigenspace of the Laplacian on $S^p(\frac{1}{\sqrt{2}})\times S^q(\frac{1}{\sqrt{2}})$ with respect to the eigenvalue $\lambda_i$. \\

From this, together with the above discussion, we conclude that that for $1\le p<q=m-p\le2p$, the largest subspace of smooth functions on the biharmonic hypersurface $S^p(\frac{1}{\sqrt{2}})\times S^q(\frac{1}{\sqrt{2}})$ on which $Q(f)<0$ is $E_{\lambda_0}$, the eigenspaces of the Laplacian on $S^p(\frac{1}{\sqrt{2}})\times S^q(\frac{1}{\sqrt{2}})$ corresponding the eigenvalues $\lambda_0=0$. Since $E_{\lambda_0}=\r$ has dimension $1$. Thus, we obtain the first case in the index formula (\ref{indF}).  For the case $1\le p<q=m-p$ and $q>2p$, the largest subspace of smooth functions on the biharmonic hypersurface $S^p(\frac{1}{\sqrt{2}})\times S^q(\frac{1}{\sqrt{2}})$ on which $Q(f)<0$ is $E_{\lambda_0}\oplus E_{\lambda_1}$, where $E_{\lambda_0}$ and $E_{\lambda_1}$ are the eigenspaces of the Laplacian on $S^p(\frac{1}{\sqrt{2}})\times S^q(\frac{1}{\sqrt{2}})$ corresponding the eigenvalues $\lambda_0=0, \lambda_1=-2p$. Since the subspace $E_{\lambda_0}\oplus E_{\lambda_1}$ has dimension $1+(p+1)=p+2$,
we obtain the biharmonic  index of  $S^p(\frac{1}{\sqrt{2}})\times S^q(\frac{1}{\sqrt{2}})$ for the case  $1\le p<q=m-p$ and $q>2p$, which complete the proof of Statement (i).\\

 For Statement (ii), first note that the proper biharmonic hypersurface $S^m(\frac{1}{\sqrt{2}})\to S^{m+1}$  is totally umbilical with $|A|^2=m, H=-1$, and hence $A(\nabla f)=-\nabla f$. It follows that the index form  (\ref{Sm}) in this case reads
\begin{eqnarray}\notag
Q(f)&=\int_{S^m(\frac{1}{\sqrt{2}})} \big\{(\Delta f)^2 +4|\nabla f|^2-4m^2f^2\big\}{\rm d}v_g\\\label{Sm1}
&=\int_{S^m(\frac{1}{\sqrt{2}})} \big\{(\Delta f)^2 -4f \Delta f-4m^2f^2\big\}{\rm d}v_g,
\end{eqnarray}
where the second equality was obtained by using the divergence theorem.

Using the eigenvalues of the Laplacian on $S^m(\frac{1}{\sqrt{2}})$: 
\begin{align}\notag
\cdots<\cdots<\lambda_2<\lambda_1=-2m<\lambda_0=0.
\end{align}
one can check that for any eigenfunction function $f$ of the eigenvalue $\lambda_1=-2m$, we have $\Delta f=-2m f$, and hence (\ref{Sm1}) becomes
\begin{align}\notag
Q(f)&= \int_{S^m(\frac{1}{\sqrt{2}})} [(\Delta f)^2 -4f\Delta f-4m^2f^2]{\rm d}v_g=8m \int_{S^m(\frac{1}{\sqrt{2}})} f^2{\rm d}v_g>0.
\end{align}
Similarly, one can check that $Q(f)$ is positive on any other eigenspace, so the only subspace of $C^{\infty}(S^m(\frac{1}{\sqrt{2}}))$ on which $Q(f)<0$ is $\r$ which has dimension one. Thus, we have ${\rm Ind}\,(S^m(\frac{1}{\sqrt{2}}))=1$.
\end{proof}

\begin{remark}
Note that Statement (ii) in Theorem \ref{MT2} was  proved in \cite{LO05} in a quite different way.
\end{remark}
We end the paper with the following table which gives the indices of biharmonic hypersurfaces of  spheres in small dimensions.
\begin{center}
\begin{tabular}{|c|c|c|c|c|c|c|c|c|c|c|}
\hline Ambient sphere & Biharmonic hypersurface & Index\\
\hline $S^4$ & $S^1(\frac{1}{\sqrt{2}})\times S^2(\frac{1}{\sqrt{2}})$& 1   \\
\hline $S^5$ & $S^1(\frac{1}{\sqrt{2}})\times S^3(\frac{1}{\sqrt{2}})$& 3   \\
\hline $S^6$ & $S^1(\frac{1}{\sqrt{2}})\times S^4(\frac{1}{\sqrt{2}})$& 3   \\
\multirow{2}{4em} & $S^2(\frac{1}{\sqrt{2}})\times S^3(\frac{1}{\sqrt{2}})$& 1   \\ 
\hline $S^7$ & $S^1(\frac{1}{\sqrt{2}})\times S^5(\frac{1}{\sqrt{2}})$& 3   \\
\multirow{2}{4em} & $S^2(\frac{1}{\sqrt{2}})\times S^4(\frac{1}{\sqrt{2}})$& 1   \\ 
\hline $S^8$ & $S^1(\frac{1}{\sqrt{2}})\times S^6(\frac{1}{\sqrt{2}})$& 3   \\
\multirow{3}{4em} & $S^2(\frac{1}{\sqrt{2}})\times S^5(\frac{1}{\sqrt{2}})$& 4   \\ 
 & $S^3(\frac{1}{\sqrt{2}})\times S^4(\frac{1}{\sqrt{2}})$& 1   \\ 
\hline $S^9$ & $S^1(\frac{1}{\sqrt{2}})\times S^7(\frac{1}{\sqrt{2}})$& 3   \\
\multirow{3}{4em} & $S^2(\frac{1}{\sqrt{2}})\times S^6(\frac{1}{\sqrt{2}})$& 4   \\ 
 & $S^3(\frac{1}{\sqrt{2}})\times S^5(\frac{1}{\sqrt{2}})$& 1   \\ 
\hline $S^{10}$ & $S^1(\frac{1}{\sqrt{2}})\times S^8(\frac{1}{\sqrt{2}})$& 3   \\
\multirow{3}{4em} & $S^2(\frac{1}{\sqrt{2}})\times S^7(\frac{1}{\sqrt{2}})$& 4   \\ 
 & $S^3(\frac{1}{\sqrt{2}})\times S^6(\frac{1}{\sqrt{2}})$& 1   \\ 
 & $S^4(\frac{1}{\sqrt{2}})\times S^5(\frac{1}{\sqrt{2}})$& 1   \\ 
\hline
\end{tabular}
\end{center}
%\caption{}
%\end{table}
\vskip1cm

It follows from the table and Theorem \ref{MT2} that  for any natural number  $k$ except $k=2$, there exists a proper  biharmonic hypersurface in a sphere $S^m$ with $m$ depending on $k$ whose index is $k$.

\end{document}